\theoremstyle{plain}
\newtheorem{theorem}{Theorem}
\numberwithin{equation}{section}
\begin{document}

\title {On Weierstrass $\wp$ in signature 3}

\date{}

\author[P.L. Robinson]{P.L. Robinson}

\address{Department of Mathematics \\ University of Florida \\ Gainesville FL 32611  USA }

\email[]{paulr@ufl.edu}

\subjclass{} \keywords{}

\begin{abstract}
 In his work on the Ramanujan theory of elliptic functions to alternative bases, Shen constructed two different elliptic functions in signature three; we determine the precise relationship between them, by precisely relating their coperiodic Weierstrass $\wp$ functions. 
\end{abstract}

\maketitle

\medbreak

The Weierstrass $\wp$ functions of our title are two in number. They were found by Li-Chien Shen in his contributions to the Ramanujan theory of elliptic functions to alternative bases, specifically in signature three. 

\medbreak 

In [2004] Shen fashioned an elliptic function in signature three by modifying a construction of the classical Jacobian elliptic functions: in the classical theory, the Jacobian functions may be constructed via incomplete integrals of the hypergeometric function $\,_2F_1 (\tfrac{1}{2}, \tfrac{1}{2}; \tfrac{1}{2}; \bullet)$; Shen showed that incomplete integrals of $\,_2F_1 (\tfrac{1}{3}, \tfrac{2}{3}; \tfrac{1}{2}; \bullet)$ give rise to an elliptic function ${\rm dn}_3$ that is a signature-three version of the classical Jacobian function  ${\rm dn}$. 

\medbreak 

In [2016] Shen took a quite different approach, analyzing the solutions to certain differential equations that involve the Chebyshev polynomials: explicitly, he studied $y'^2 = T_n(y) - (1 - 2 \mu^2)$ with modulus $0 < \mu < 1$. In general, the solutions to these equations are hyperelliptic: in signatures six ($n = 3$) and four ($n = 4$) these solutions are actually elliptic; in signature three ($n = 6$) the solution $y_6$ is not elliptic, but its square is elliptic. 

\medbreak 

The elliptic functions ${\rm dn}_3$ of [2004] and $y_6^2$ of [2016] both have order two; of course, each has its own coperiodic Weierstrass $\wp$ function. These two $\wp$ functions are not the same: here we determine the precise relationship between them; this relationship naturally engenders a relationship between ${\rm dn}_3$ and $ y_6^2$. Our account includes sufficient detail to make it essentially self-contained. 

\medbreak 

In a separate paper we shall address signature four, connecting the elliptic function ${\rm dn}_2$ that Shen constructed in [2014] to the elliptic function $y_4$ that he presented in [2016]. 

\medbreak 

\section*{Signature three elliptic functions}

\medbreak 

Fix a modulus $\kappa$ in the range $0 < \kappa < 1$ and let $\lambda = \sqrt{1 - \kappa^2} \in (0, 1)$ be the complementary modulus. For convenience, we may write 
$$F = \,_2F_1 (\tfrac{1}{3}, \tfrac{2}{3}; \tfrac{1}{2}; \bullet)$$
as an abbreviation for the indicated hypergeometric function. For the standard evaluation 
$$F(\sin^2 z) = \frac{\cos \frac{1}{3}z}{\cos z}$$
we refer to item (11) on page 101 of [1953] or Entry 35(iii) on page 99 of [1989]. 

\medbreak 

The elliptic function ${\rm dn}_3$ is constructed in [2004] as follows. 

\medbreak 

The assignment 
$$T \mapsto \int_0^T F(\kappa^2 \sin^2 t) \, {\rm d} t$$ 
inverts to define a function $\phi$ with $\phi(0) = 0$ and 
$$u = \int_0^{\phi(u)} F(\kappa^2 \sin^2 t) \, {\rm d} t$$
both as a strictly increasing bijection from $\mathbb{R}$ to $\mathbb{R}$ and as a holomorphic function on a suitably small disc around $0$ in $\mathbb{C}$.  Define $\omega \in \mathbb{R}$ by $\phi(\omega) = \frac{1}{2} \pi$ so that 
$$\omega = \int_0^{\frac{1}{2} \pi} F(\kappa^2 \sin^2 t) \, {\rm d} t.$$
On account of the standard identity  
$$\int_0^{\frac{1}{2} \pi} \,_2F_1 (\tfrac{1}{3}, \tfrac{2}{3}; \tfrac{1}{2}; \kappa^2 \sin^2 t) \, {\rm d} t = \tfrac{1}{2} \pi \,_2F_1 (\tfrac{1}{3}, \tfrac{2}{3}; 1; \kappa^2)$$
it follows that 
$$\omega = \tfrac{1}{2} \pi \,_2F_1 (\tfrac{1}{3}, \tfrac{2}{3}; 1; \kappa^2).$$
An elementary integral calculation shows that if $u \in \mathbb{R}$ then 
$$\phi(u + 2 \omega) = \phi(u) + \pi;$$ 
accordingly, the derivative $\phi ' : \mathbb{R} \to \mathbb{R}$ has period $2 \omega$. This derivative extends analytically to a small enough disc about each point of $\mathbb{R}$ and therefore to a narrow enough band about $\mathbb{R}$ in $\mathbb{C}$. In fact, it is shown in [2004] that $\phi'$ extends to an elliptic function; this is the elliptic function ${\rm dn}_3$ of Shen. 

\medbreak 

In brief, the verification that $\phi'$ extends elliptically proceeds as follows. For convenience, let us write $\delta: = \phi'$. The auxiliary function $\psi: \mathbb{R} \to \mathbb{R}$ is defined by 
$$\psi = \arcsin (\kappa \sin \phi)$$
and also extends holomorphically to a band about $\mathbb{R}$ in $\mathbb{C}$. By differentiation  
$$\delta = \phi' = \frac{1}{F(\kappa^2 \sin^2 \phi)} = \frac{1}{F(\sin^2 \psi)} = \frac{\cos \psi}{\cos \frac{1}{3} \psi}$$
and by trigonometric triplication
$$\cos \psi = 4 \cos^3 \tfrac{1}{3} \psi - 3 \cos \tfrac{1}{3} \psi$$
whence by substitution and rearrangement 
$$\delta^3 + 3 \delta^2 = 4 \cos^2 \psi = 4(1 - \sin^2 \psi) = 4(1 - \kappa^2 \sin^2 \phi).$$
Differentiation here followed by cancellation of a factor $\delta$ on the left and $\phi'$ on the right yields 
$$3 (\delta + 2) \delta' = - 8 \kappa^2 \sin \phi \cos \phi$$
so that 
$$9 (\delta + 2)^2 (\delta')^2 = 64 \kappa^4 \sin^2 \phi \cos^2 \phi.$$ 
As the foregoing cubic in $\delta$ rearranges to give 
$$4 \kappa^2 \sin^2 \phi = (1 - \delta)(2 + \delta)^2 \; \; {\rm and} \; \; 4 \kappa^2 \cos^2 \phi = \delta^3 + 3 \delta^2 - 4 \lambda^2$$ 
we arrive at the following property of $\delta$. 

\begin{theorem} \label{dn3}
The function $\delta = \phi'$ satisfies $\delta(0) = 1$ and the differential equation 
$$9 (\delta')^2 = 4 (1 - \delta) (\delta^3 + 3 \delta^2 - 4 \lambda^2).$$ 
\end{theorem}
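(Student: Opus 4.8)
The plan is to assemble the two displayed factorizations together with the squared-derivative relation already obtained, since the substantive analytic work — extending $\phi$ and $\delta$ holomorphically to a band about $\mathbb{R}$ and deriving the cubic $\delta^3 + 3\delta^2 = 4(1 - \kappa^2 \sin^2 \phi)$ — is by this point complete. What remains is algebraic bookkeeping.

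First I would dispose of the initial condition. Since $\phi(0) = 0$ we have $\psi(0) = \arcsin(\kappa \sin 0) = 0$, and therefore $\delta(0) = \cos 0 / \cos 0 = 1$; equivalently $\delta(0) = \phi'(0) = 1/F(0) = 1$, because the hypergeometric series $F$ takes the value $1$ at the origin. Either route gives $\delta(0) = 1$ at once.

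For the differential equation, I would multiply the two factorizations
$$4 \kappa^2 \sin^2 \phi = (1 - \delta)(2 + \delta)^2 \quad \text{and} \quad 4 \kappa^2 \cos^2 \phi = \delta^3 + 3 \delta^2 - 4 \lambda^2,$$
obtaining $16 \kappa^4 \sin^2 \phi \cos^2 \phi = (1 - \delta)(2 + \delta)^2 (\delta^3 + 3 \delta^2 - 4 \lambda^2)$. Comparing this with the earlier relation $9 (\delta + 2)^2 (\delta')^2 = 64 \kappa^4 \sin^2 \phi \cos^2 \phi = 4 \cdot 16 \kappa^4 \sin^2 \phi \cos^2 \phi$ yields
$$9 (2 + \delta)^2 (\delta')^2 = 4 (1 - \delta)(2 + \delta)^2 (\delta^3 + 3 \delta^2 - 4 \lambda^2),$$
and cancelling the common factor $(2 + \delta)^2$ produces the asserted equation.

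The only point requiring a moment's care is this cancellation, together with a check of the two factorizations that feed it. The cancellation is legitimate wherever $\delta \neq -2$, and since both sides are holomorphic on the band about $\mathbb{R}$, the resulting polynomial identity persists across any isolated point where $2 + \delta$ might vanish, by continuity; on $\mathbb{R}$ itself $\delta$ stays near its value $1$, so no such point intrudes. I would also verify the factorizations directly — namely that $\delta^3 + 3 \delta^2 - 4 = (\delta - 1)(\delta + 2)^2$, so that $4 \kappa^2 \sin^2 \phi = 4 - (\delta^3 + 3 \delta^2) = (1 - \delta)(2 + \delta)^2$, and that substituting $\kappa^2 = 1 - \lambda^2$ into $4 \kappa^2 \cos^2 \phi = 4 \kappa^2 - 4 \kappa^2 \sin^2 \phi$ returns $\delta^3 + 3 \delta^2 - 4 \lambda^2$. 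These two identities are exactly what make the whole computation close up, so I regard confirming them as the real (if modest) obstacle rather than any deeper difficulty.
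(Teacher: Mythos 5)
Your proposal is correct and follows essentially the same route as the paper: the paper's ``proof'' is exactly the derivation preceding the theorem statement (the cubic $\delta^3 + 3\delta^2 = 4(1-\kappa^2\sin^2\phi)$, its differentiated and squared form, and the two displayed factorizations), which you reassemble by multiplying the factorizations and cancelling $(2+\delta)^2$. Your added checks of the initial condition, the factorization identities, and the legitimacy of the cancellation are details the paper leaves implicit, but they introduce no new method.
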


\begin{proof} 
Done. See [2004] for the first account. 
\end{proof} 

\medbreak 

We may solve this differential equation explicitly. In fact, the successive substitutions 
$$r = \frac{1}{1 - \delta}, \; q = \tfrac{4}{9} \kappa^2 r, \; p = q - \tfrac{1}{3}$$
result in the differential equation
$$(p')^2 = 4 p^3 - g_2 p - g_3$$
with 
$$g_2 = \tfrac{4}{27} (8 \lambda^2 + 1) \; \; {\rm and} \; \; g_3 = \tfrac{8}{729} (8 \lambda^4 + 20 \lambda^2 - 1)$$
while the initial condition $\delta(0) = 1$ gives $p$ a pole at $0$. This proves that $p$ is the Weierstrass function $\wp (\bullet; g_2, g_3)$ with the indicated invariants. Reversing the substitutions verifies that $\delta = \phi'$ does extend to an elliptic function as claimed. 

\medbreak 

\begin{theorem} \label{p}
The derivative $\phi' : \mathbb{R} \to \mathbb{R}$ extends to the elliptic function ${\rm dn}_3$ given by 
$$(1 - {\rm dn}_3) (\tfrac{1}{3} + p) = \tfrac{4}{9} \kappa^2$$
where $p = \wp(\bullet; g_2, g_3)$ is the Weierstrass function with invariants 
$$g_2 = \tfrac{4}{27} (8 \lambda^2 + 1) \; \; {\rm and} \; \; g_3 = \tfrac{8}{729} (8 \lambda^4 + 20 \lambda^2 - 1).$$
\end{theorem}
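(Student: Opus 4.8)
The plan is to read the displayed relation as the end-product, run backwards, of the substitution chain $r = 1/(1 - \delta)$, $q = \tfrac{4}{9}\kappa^2 r$, $p = q - \tfrac13$ already introduced, and to supply the two facts that the surrounding discussion merely asserts: that $p$ solves the Weierstrass equation with exactly the stated invariants, and that $p$ is $\wp(\bullet; g_2, g_3)$ itself rather than some translate of it.

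First I would perform the substitutions explicitly on the equation of Theorem \ref{dn3}. Writing $\delta = 1 - 1/r$ and $\delta' = r'/r^2$ converts $9(\delta')^2 = 4(1 - \delta)(\delta^3 + 3\delta^2 - 4\lambda^2)$, after using $4 - 4\lambda^2 = 4\kappa^2$ and clearing $r^4$, into the cubic equation $9(r')^2 = 16\kappa^2 r^3 - 36 r^2 + 24 r - 4$. The affine rescaling $q = \tfrac{4}{9}\kappa^2 r$ followed by the shift $p = q - \tfrac13$ then brings this to normal form, the translation by $-\tfrac13$ being chosen precisely to annihilate the $p^2$ coefficient, so that what survives is $(p')^2 = 4p^3 - g_2 p - g_3$.

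The main obstacle is the bookkeeping that delivers $g_2$ and $g_3$ in the asserted shape. After the rescaling the two coefficients emerge as polynomials in $\kappa^2$; substituting $\kappa^2 = 1 - \lambda^2$ and collecting powers of $\lambda$ should give $g_2 = \tfrac{4}{27}(8\lambda^2 + 1)$ and $g_3 = \tfrac{8}{729}(8\lambda^4 + 20\lambda^2 - 1)$. This is a finite, deterministic calculation, but it is where all the arithmetic concentrates and where a slip would hide; I would double-check it by confirming independently that the $p^2$ term indeed cancels.

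It then remains to identify $p$ with $\wp$. From $\delta(0) = 1$ the denominator $1 - \delta$ vanishes at the origin, and since the equation of Theorem \ref{dn3} forces $\delta'(0) = 0$ this zero is double; hence $r$, and with it $p$, has a double pole at $0$. Balancing the leading terms in $(p')^2 = 4p^3 - \cdots$ pins the principal part to exactly $z^{-2}$, so the Laurent expansion of $p$ at $0$ matches the standard normalization of $\wp$, and by the uniqueness of the solution of the Weierstrass equation with a double pole of this form at the origin we conclude $p = \wp(\bullet; g_2, g_3)$. Finally, unwinding the substitutions gives $1 - \delta = \tfrac{4}{9}\kappa^2/(\tfrac13 + p)$, that is $(1 - \delta)(\tfrac13 + p) = \tfrac{4}{9}\kappa^2$; as $p$ is elliptic so is $\delta$, which is the elliptic extension of $\phi'$ recorded as ${\rm dn}_3$.
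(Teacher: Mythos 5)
Your proposal is correct and follows essentially the same route as the paper, whose actual argument appears in the discussion preceding the theorem: the same substitution chain $r = 1/(1-\delta)$, $q = \tfrac{4}{9}\kappa^2 r$, $p = q - \tfrac{1}{3}$ reducing Theorem \ref{dn3} to $(p')^2 = 4p^3 - g_2 p - g_3$, the pole at the origin forced by $\delta(0) = 1$, the identification $p = \wp(\bullet; g_2, g_3)$, and the reversal of the substitutions. Your intermediate cubic $9(r')^2 = 16\kappa^2 r^3 - 36r^2 + 24r - 4$ and the stated invariants check out, and your added justification that the pole is double with principal part exactly $z^{-2}$ (so that $p$ is $\wp$ itself and not a translate) is a correct filling-in of a step the paper leaves implicit.
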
 

\begin{proof} 
Done. See [2004] for the original argument.
\end{proof} 

\medbreak 

This theorem makes it plain that the elliptic functions ${\rm dn}_3$ and $p$ have exactly the same periods. The invariants $g_2$ and $g_3$ being real and the discriminant 
$$g_2^3 - 27 g_3^2 = (\tfrac{16}{27})^3 \kappa^6 (1 - \kappa^2)$$
being positive, these elliptic functions have a rectangular period lattice. We have already encountered the positive real fundamental half-period: it is precisely 
$$\omega = \tfrac{1}{2} \pi \,_2F_1 (\tfrac{1}{3}, \tfrac{2}{3}; 1; \kappa^2).$$
We shall show later (see Theorem \ref{oo'}) that the purely-imaginary fundamental half-period with positive imaginary part is 
$$\omega' = {\rm i} \tfrac{\sqrt3}{2} \pi \,_2F_1 (\tfrac{1}{3}, \tfrac{2}{3}; 1; \lambda^2).$$

\medbreak 

We now develop some alternative expressions for the real half-period $\omega$. 

\medbreak 

Let us introduce the (acute) modular angle $\alpha \in (0, \tfrac{1}{2} \pi)$ by 
$$\kappa = \sin \alpha$$
so that also
$$\lambda = \cos \alpha.$$ 

\medbreak 

\begin{theorem} \label{evaltrig} 
$$\tfrac{1}{2} \pi \,_2F_1 (\tfrac{1}{3}, \tfrac{2}{3}; 1; \kappa^2) = \sqrt2 \int_0^{\alpha} \frac{\cos \frac{1}{3} \psi}{\sqrt{\cos 2 \psi - \cos 2 \alpha}} {\rm d} \psi.$$
\end{theorem}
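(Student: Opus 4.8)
The plan is to reduce the left-hand side to the integral representation of the real half-period that is already in hand, namely $\omega = \int_0^{\frac{1}{2}\pi} F(\kappa^2\sin^2 t)\,{\rm d}t$, and then to transport this integral to the interval $[0,\alpha]$ by the very substitution $\psi = \arcsin(\kappa\sin t)$ that was used above to define the auxiliary function. Since $\kappa = \sin\alpha$, this substitution carries $t = 0$ to $\psi = 0$ and $t = \tfrac{1}{2}\pi$ to $\psi = \alpha$, so the new limits are exactly those demanded by the statement.

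First I would rewrite the integrand. Because $\sin\psi = \kappa\sin t$ forces $\sin^2\psi = \kappa^2\sin^2 t$, the standard evaluation $F(\sin^2 z) = \cos\tfrac{1}{3}z / \cos z$ gives $F(\kappa^2\sin^2 t) = F(\sin^2\psi) = \cos\tfrac{1}{3}\psi / \cos\psi$. Next I would compute the Jacobian: differentiating $\sin\psi = \kappa\sin t$ yields $\cos\psi\,{\rm d}\psi = \kappa\cos t\,{\rm d}t$, and since $\kappa\cos t = \sqrt{\kappa^2 - \kappa^2\sin^2 t} = \sqrt{\kappa^2 - \sin^2\psi}$, we get ${\rm d}t = \cos\psi\,{\rm d}\psi / \sqrt{\kappa^2 - \sin^2\psi}$. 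Substituting both into the integral cancels the factor $\cos\psi$ and leaves $\omega = \int_0^\alpha \cos\tfrac{1}{3}\psi / \sqrt{\kappa^2 - \sin^2\psi}\,{\rm d}\psi$.

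It remains only to recast the radicand. Using $\kappa^2 = \sin^2\alpha$ together with the elementary identity $\sin^2\alpha - \sin^2\psi = \tfrac{1}{2}(\cos 2\psi - \cos 2\alpha)$ turns $\sqrt{\kappa^2 - \sin^2\psi}$ into $\tfrac{1}{\sqrt2}\sqrt{\cos 2\psi - \cos 2\alpha}$, and pulling the factor $\sqrt2$ out front produces exactly the asserted right-hand side. The only point requiring care, rather than a genuine obstacle, is that the substitution becomes singular at the upper endpoint, where $\cos t \to 0$ and the transformed integrand acquires an inverse-square-root singularity at $\psi = \alpha$; but this singularity is integrable and the original integral over $[0,\tfrac{1}{2}\pi]$ has a perfectly regular integrand, so the change of variables is legitimate and no convergence issue arises.
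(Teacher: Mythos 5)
Your proof is correct and is essentially identical to the paper's: both start from the previously established identity $\omega = \tfrac{1}{2}\pi\,_2F_1(\tfrac13,\tfrac23;1;\kappa^2) = \int_0^{\pi/2}F(\kappa^2\sin^2 t)\,{\rm d}t$, apply the substitution $\sin\psi = \kappa\sin t$ together with the evaluation $F(\sin^2\psi) = \cos\tfrac13\psi/\cos\psi$, and finish with the identity $\sin^2\alpha - \sin^2\psi = \tfrac12(\cos 2\psi - \cos 2\alpha)$. Your added remark about the integrable endpoint singularity at $\psi=\alpha$ is a reasonable point of care that the paper leaves tacit, but it does not change the argument.
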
 

\begin{proof} 
In the expression 
$$\omega = \int_0^{\frac{1}{2} \pi} F(\kappa^2 \sin^2 \phi) \, {\rm d} \phi$$
make the substitution 
$$\sin \psi = \kappa \sin \phi$$ 
so that 
$$\frac{{\rm d} \phi}{{\rm d} \psi} = \frac{\cos \psi}{\kappa \cos \phi}$$
and take into account the hypergeometric evaluation 
$$F(\sin^2 \psi) = \frac{\cos \frac{1}{3} \psi}{\cos \psi}$$
to deduce that 
$$\omega = \int_0^{\alpha} F(\sin^2 \psi) \frac{{\rm d} \phi}{{\rm d} \psi} {\rm d} \psi = \int_0^{\alpha}\frac{\cos \frac{1}{3} \psi}{\cos \psi}\frac{\cos \psi}{\kappa \cos \phi} {\rm d} \psi$$
where 
$$\kappa \cos \phi = \sqrt{\sin^2 \alpha - \sin^2 \psi} = \sqrt{\tfrac{1}{2}(\cos 2 \psi - \cos 2 \alpha)}.$$
\end{proof} 

\medbreak 

We remark that the two sides of this identity are bridged by the Legendre function $P_{-1/6}$, which is related to the left by its standard (Murphy) hypergeometric representation and to the right by its Mehler-Dirichlet integral representation. In fact, the general form of this identity (with $P_{\nu}$ as bridge)  is invoked at equation (3.2) in [2016] and is crucial to the development therein. 

\medbreak 

We now gently massage this formula for $\omega$ to produce another formula for $\omega$ that makes direct contact with [2016]. Here, $T_6$ is the degree six Chebyshev polynomial (`of the first kind'): thus, $T_6(\cos t) = \cos 6 t.$  

\medbreak 

\begin{theorem} \label{evalchebyshev}
$$\tfrac{1}{2} \pi \,_2F_1 (\tfrac{1}{3}, \tfrac{2}{3}; 1; \kappa^2) = \sqrt6 \int_{\cos \tfrac{1}{3} (\pi + \alpha)}^{\cos \tfrac{1}{3} (\pi - \alpha)} \frac{1}{\sqrt{T_6 (x) - \cos 2 \alpha}} \, {\rm d} x\, .$$
\end{theorem}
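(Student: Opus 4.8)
The plan is to start from the trigonometric formula of Theorem \ref{evaltrig} and convert its integral over $\psi \in [0,\alpha]$ into the stated integral over $x$ by a substitution built on the defining property $T_6(\cos\theta) = \cos 6\theta$. Since the denominator there is $\sqrt{\cos 2\psi - \cos 2\alpha}$, I want a new variable $x = \cos\theta$ for which $\cos 6\theta = \cos 2\psi$; the two natural choices are $\theta = \tfrac{1}{3}(\pi - \psi)$ and $\theta = \tfrac{1}{3}(\pi + \psi)$, each giving $T_6(x) = \cos 2\psi$ so that the radical becomes $\sqrt{T_6(x) - \cos 2\alpha}$ verbatim. A single one of these substitutions, however, neither absorbs the numerator $\cos\tfrac{1}{3}\psi$ cleanly nor produces the symmetric pair of limits $\cos\tfrac{1}{3}(\pi \pm \alpha)$ that appear in the target.

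The device that resolves both difficulties at once is the sum-to-product identity
$$\sin\tfrac{1}{3}(\pi - \psi) + \sin\tfrac{1}{3}(\pi + \psi) = 2 \sin\tfrac{1}{3}\pi\,\cos\tfrac{1}{3}\psi = \sqrt{3}\,\cos\tfrac{1}{3}\psi,$$
so that I may replace $\cos\tfrac{1}{3}\psi$ in the integrand by $\tfrac{1}{\sqrt3}\bigl[\sin\tfrac{1}{3}(\pi-\psi) + \sin\tfrac{1}{3}(\pi+\psi)\bigr]$ and split $\omega$ into two integrals. In the first I substitute $x = \cos\tfrac{1}{3}(\pi - \psi)$, whose differential $\mathrm{d}x = \tfrac{1}{3}\sin\tfrac{1}{3}(\pi-\psi)\,\mathrm{d}\psi$ is exactly (a constant multiple of) the first sine; in the second I substitute $x = \cos\tfrac{1}{3}(\pi+\psi)$, whose differential supplies the second sine. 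In each case the troublesome numerator is consumed by the Jacobian and $\cos 2\psi$ is promoted to $T_6(x)$, leaving the clean integrand $1/\sqrt{T_6(x) - \cos 2\alpha}$.

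Finally I assemble the pieces. Both substitutions send $\psi = 0$ to the common value $x = \cos\tfrac{1}{3}\pi = \tfrac{1}{2}$ and send $\psi = \alpha$ to $\cos\tfrac{1}{3}(\pi - \alpha)$ and $\cos\tfrac{1}{3}(\pi + \alpha)$ respectively; keeping track of the orientation reversal coming from the sign of $\mathrm{d}x$ in the second substitution, the two resulting integrals join at $x = \tfrac{1}{2}$ into the single integral from $\cos\tfrac{1}{3}(\pi + \alpha)$ to $\cos\tfrac{1}{3}(\pi - \alpha)$. The numerical constants combine as $\tfrac{\sqrt2}{\sqrt3}\cdot 3 = \sqrt6$, which is precisely the prefactor claimed. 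I expect the main obstacle to be one of recognition rather than computation: seeing that $\cos\tfrac{1}{3}\psi$ must be broken into the symmetric sum of two sines is the whole trick, after which the two substitutions and the matching of endpoints are routine, the one point demanding care being the bookkeeping of orientation and limits so that the halves fuse into a single interval with the correct sign.
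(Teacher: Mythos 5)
Your proposal is correct and is, in essence, the paper's own argument in mirror image: the paper uses the evenness of the integrand to rewrite $\omega$ as $\tfrac{1}{\sqrt2}\int_{-\alpha}^{\alpha}$, replaces $\cos\tfrac{1}{3}\psi$ by $\tfrac{2}{\sqrt3}\sin\tfrac{1}{3}(\pi+\psi)$ via the addition formula (the odd term vanishing by parity), and then performs the single substitution $x=\cos\tfrac{1}{3}(\pi+\psi)$, whereas you keep the interval $[0,\alpha]$, split $\cos\tfrac{1}{3}\psi=\tfrac{1}{\sqrt3}\bigl[\sin\tfrac{1}{3}(\pi-\psi)+\sin\tfrac{1}{3}(\pi+\psi)\bigr]$ by sum-to-product, and perform the two corresponding substitutions, gluing the halves at $x=\tfrac12$. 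The two computations match term-by-term under $\psi\mapsto-\psi$, and your constants, Jacobians, and limits all check out, so this is the same approach with different bookkeeping.
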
 

\begin{proof} 
We first make use of the addition formula 
$$\sin \tfrac{1}{3} (\pi + \psi) = \sin \tfrac{1}{3} \pi \cos \tfrac{1}{3} \psi + \cos \tfrac{1}{3} \pi \sin \tfrac{1}{3} \psi$$
along with the evenness of cosine and the oddness of sine, to deduce from Theorem \ref{evaltrig} that 
$$\omega = \frac{1}{\sqrt2} \int_{ - \alpha}^{\alpha} \frac{\cos \frac{1}{3} \psi}{\sqrt{\cos 2 \psi - \cos 2 \alpha}} {\rm d} \psi = \sqrt{\frac{2}{3}} \int_{- \alpha}^{\alpha} \frac{\sin \frac{1}{3} (\pi + \psi)}{\sqrt{\cos 2 \psi - \cos 2 \alpha}} {\rm d} \psi;$$
we next substitute $\theta = \pi + \psi$ to deduce that 
$$\omega = \sqrt{\frac{2}{3}} \int_{\pi - \alpha}^{\pi + \alpha} \frac{\sin \frac{1}{3} \theta}{\sqrt{\cos 2 \theta - \cos 2 \alpha}} {\rm d} \theta$$
and finally substitute $x = \cos \frac {1}{3} \theta$ to conclude that 
$$\omega = \sqrt6 \int_{\cos \tfrac{1}{3} (\pi + \alpha)}^{\cos \tfrac{1}{3} (\pi - \alpha)} \frac{1}{\sqrt{T_6 (x) - \cos 2 \alpha}} \, {\rm d} x$$
as claimed. 
\end{proof} 

\medbreak 

This formula for $\omega$ places us firmly in the context of [2016] and serves as a convenient point of entry to the signature-three portion of that paper: it prompts us to consider the differential equation 
$$(w')^2 = T_6 (w) - (1 - 2 \kappa^2)$$ 
in which we have reinstated $\kappa = \sin \alpha$ so that $\cos 2 \alpha = 1 - 2 \kappa^2$; we solve this differential equation subject to the initial condition $w(0) = 0$. To effect a solution, let us write 
$$W = T_2(w) = 2 w^2 - 1$$
so that 
$$T_6(w) = T_3 (T_2(w)) = 4 W^3 - 3 W.$$
Now the foregoing differential equation for $w$ becomes the following differential equation for $W$. 
\medbreak 

\begin{theorem} \label{W}
The function $W$ satisfies $W(0) = -1$ and the differential equation 
$$(W')^2 = 8(W + 1) (4W^3 - 3W - (1 - 2 \kappa^2)).$$ 
\end{theorem}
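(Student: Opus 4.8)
The plan is to obtain both assertions by direct differentiation of the defining relation $W = T_2(w) = 2w^2 - 1$, using the given differential equation for $w$ and the Chebyshev composition identity $T_6(w) = T_3(T_2(w)) = 4W^3 - 3W$ already recorded before the statement. The initial condition is immediate: since $w(0) = 0$, we have $W(0) = 2 \cdot 0 - 1 = -1$.

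For the differential equation, first I would differentiate $W = 2w^2 - 1$ to get $W' = 4 w w'$, and then square to obtain $(W')^2 = 16 w^2 (w')^2$. The task is then to rewrite the right-hand side purely in terms of $W$. I would substitute the hypothesis $(w')^2 = T_6(w) - (1 - 2\kappa^2)$, and invoke $T_6(w) = 4W^3 - 3W$ to replace the Chebyshev term, giving $(w')^2 = 4W^3 - 3W - (1 - 2\kappa^2)$. Separately, inverting the defining relation yields $w^2 = \tfrac{1}{2}(W + 1)$, so that $16 w^2 = 8(W+1)$.

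Combining these two substitutions produces
$$(W')^2 = 16 w^2 (w')^2 = 8(W+1)\bigl(4W^3 - 3W - (1 - 2\kappa^2)\bigr),$$
which is exactly the claimed equation.

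I do not anticipate a genuine obstacle here; the argument is a routine chain-rule computation followed by two algebraic substitutions. The only point demanding care is the bookkeeping of numerical factors — in particular making sure the $16$ from squaring $W' = 4ww'$ pairs correctly with $w^2 = \tfrac12(W+1)$ to give the overall factor $8$ in front of $(W+1)$ — and confirming that the Chebyshev nesting $T_6 = T_3 \circ T_2$ is applied with the correct monic normalization $T_3(x) = 4x^3 - 3x$.
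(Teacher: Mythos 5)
Your proof is correct and is exactly the ``simple substitution'' the paper itself invokes: differentiate $W = 2w^2 - 1$, substitute $(w')^2 = T_6(w) - (1-2\kappa^2)$ with $T_6(w) = 4W^3 - 3W$, and use $w^2 = \tfrac12(W+1)$ to collect the factor $8(W+1)$. (One terminological quibble: $T_3(x) = 4x^3 - 3x$ is the standard Chebyshev normalization, not a \emph{monic} one.)
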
 

\begin{proof} 
Simple substitution. See [2016] for an alternative route here. 
\end{proof} 

\medbreak 

We may solve this initial value problem explicitly, as follows. 

\medbreak 

\begin{theorem} \label{P}
$W$ is the elliptic function given by 
$$(1 + W) (6 - P) = 4 \lambda^2.$$ 
where $P = \wp(\bullet; G_2, G_3)$ is the Weierstrass function with invariants 
$$G_2 = 48(1 + 8 \kappa^2) \; \; {\rm and} \; \; G_3 = 64 (1 - 20 \kappa^2 - 8 \kappa^4).$$ 
\end{theorem}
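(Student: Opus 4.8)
The plan is to follow verbatim the route that carried Theorem \ref{dn3} into Theorem \ref{p}, since the differential equation of Theorem \ref{W} has exactly the same shape: the right-hand side is a quartic that factors as a linear term times a cubic, with the linear factor $W+1$ vanishing precisely at the initial value. Accordingly I would introduce the substitution chain
$$R = \frac{1}{1 + W}, \quad Q = 4 \lambda^2 R, \quad P = 6 - Q,$$
which is the precise analogue of the chain $r = 1/(1-\delta)$, $q = \tfrac49 \kappa^2 r$, $p = q - \tfrac13$ used before, and which by construction reproduces the asserted relation $(1+W)(6-P) = 4\lambda^2$.

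First I would differentiate and substitute. From $P = 6 - 4\lambda^2 (1+W)^{-1}$ we get $P' = 4\lambda^2 (1+W)^{-2} W'$, so that Theorem \ref{W} gives
$$(P')^2 = \frac{16 \lambda^4}{(1+W)^4} (W')^2 = \frac{128 \lambda^4}{(1+W)^3} \big( 4 W^3 - 3 W - (1 - 2\kappa^2) \big).$$
Writing $W = R^{-1} - 1$ and using $2 - 2\kappa^2 = 2\lambda^2$, the bracketed cubic divided by $(1+W)^3$ collapses to an honest cubic polynomial in $R$, namely
$$(P')^2 = 128 \lambda^4 \big( 4 - 12 R + 9 R^2 - 2\lambda^2 R^3 \big).$$
I would then substitute $R = (6 - P)/(4\lambda^2)$ and expand. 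The decisive feature is that the resulting $P^2$ coefficient cancels, leaving the Weierstrass normal form $(P')^2 = 4 P^3 - G_2 P - G_3$; reading off the linear and constant coefficients yields $G_2 = 432 - 384\lambda^2$ and $G_3 = -1728 + 2304\lambda^2 - 512\lambda^4$, which rewrite through $\lambda^2 = 1 - \kappa^2$ as the stated $G_2 = 48(1 + 8\kappa^2)$ and $G_3 = 64(1 - 20\kappa^2 - 8\kappa^4)$.

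It remains to install the initial condition and invert. Since $W(0) = -1$, the quantity $R$, and hence $P$, has a pole at $0$; as $P$ satisfies the Weierstrass equation with the computed invariants and has a double pole at the origin, it must be $\wp(\bullet; G_2, G_3)$. Reversing the substitution chain then exhibits $W = 4\lambda^2/(6 - P) - 1$ as an elliptic function, which is the content of the theorem. The only genuine work is the polynomial algebra, and within it the single point that must come out right is the vanishing of the quadratic term: this is exactly the signal that the additive constant $6$ in $P = 6 - Q$ is correctly calibrated, any other choice leaving a nonzero $P^2$ coefficient and destroying the normal form. Matching the two surviving coefficients against the claimed $G_2$ and $G_3$ is thereafter a mechanical check.
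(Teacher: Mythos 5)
Your proposal is correct and follows essentially the same route as the paper: the paper's proof uses the identical substitution chain $R = 1/(W+1)$, $Q = -4\lambda^2 R$, $P = Q + 6$ (your sign convention for the intermediate $Q$ gives the very same $P$), notes that $P$ has a pole at the origin and satisfies $(P')^2 = 4P^3 - G_2 P - G_3$, and concludes that $P = \wp(\bullet; G_2, G_3)$. The only difference is that you carry out explicitly the polynomial algebra (including the decisive cancellation of the $P^2$ term and the coefficient identifications $G_2 = 432 - 384\lambda^2$, $G_3 = -1728 + 2304\lambda^2 - 512\lambda^4$) that the paper leaves to the reader, and your computations check out.
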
 

\begin{proof} 
The successive substitutions 
$$R = \frac{1}{W + 1}, \; Q = - 4 \lambda^2 R, \; P = Q + 6$$ 
produce a function $P$ that has a pole at the origin and satisfies the differential equation 
$$(P')^2 = 4 P^3 - G_2 P - G_3.$$ 
It follows that $P$ is the Weierstrass function with the indicated invariants and that $W$ is as claimed. 
\end{proof} 

\medbreak 

This theorem makes it plain that the elliptic functions $W$ and $P$ have exactly the same periods. The invariants being real and the discriminant positive, these elliptic functions have a rectangular period lattice; let $(2 \, \Omega, 2 \, \Omega')$ be the fundamental pair of periods in which the real $\Omega$ is positive and the purely imaginary $\Omega'$ has positive imaginary part. We shall derive explicit hypergeometric expressions for $\Omega$ and $\Omega'$ in Theorem \ref{O} and Theorem \ref{O'}. 

\medbreak 

The Weierstrass $\wp$ functions $p$ and $P$ are of course connected by the fact of our passage from [2004] to [2016]; the connexion between them is brought out explicitly by a glance at their invariants, as displayed in Theorem \ref{p} and Theorem \ref{P}. This precise connexion involves a twist, to highlight which we decorate these $\wp$ functions by the modulus that enters the construction of ${\rm dn}_3$: thus, $p_{\kappa}$ is the $\wp$ function that appears in Theorem \ref{p} while the $\wp$ function $p_{\lambda}$ has invariants $g_2 =  \tfrac{4}{27} (8 \kappa^2 + 1)$ and $g_3 = \tfrac{8}{729} (8 \kappa^4 + 20 \kappa^2 - 1)$; the $\wp$ function $P_{\kappa}$ has invariants  $G_2 = 48(1 + 8 \kappa^2)$ and $G_3 = 64 (1 - 20 \kappa^2 - 8 \kappa^4)$ as in Theorem \ref{P}. 

\medbreak 

\begin{theorem} \label{pP}
The Weierstrass functions $p$ and $P$ are related by the identity 
$$P_{\kappa} (z) = - 18 p_{\lambda} (3 \sqrt2 \, {\rm i} z).$$ 
\end{theorem}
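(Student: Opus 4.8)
The plan is to read the identity off from the standard homogeneity relation for the Weierstrass function. For any nonzero $\mu$ the lattice description of $\wp$, together with the scalings $g_2(\mu L) = \mu^{-4} g_2(L)$ and $g_3(\mu L) = \mu^{-6} g_3(L)$ of the invariants, gives
$$\wp(z; \mu^{-4} g_2, \mu^{-6} g_3) = \mu^{-2}\, \wp(\mu^{-1} z; g_2, g_3).$$
The entire problem thus collapses to finding a single scale factor $\mu$ that simultaneously transports the invariants of $p_\lambda$ onto those of $P_\kappa$; the appearance of a genuine (and imaginary) such $\mu$ is the real content of the asserted ``twist''.

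First I would compare the two pairs of invariants. Here $p_\lambda$ carries $g_2 = \tfrac{4}{27}(8\kappa^2+1)$, $g_3 = \tfrac{8}{729}(8\kappa^4 + 20\kappa^2 - 1)$, whereas $P_\kappa$ carries $G_2 = 48(1 + 8\kappa^2)$, $G_3 = 64(1 - 20\kappa^2 - 8\kappa^4)$. The quadratic factors agree, so $G_2/g_2 = 48 \cdot \tfrac{27}{4} = 324$ forces $\mu^{-4} = 324$; the quartic factors are negatives of one another, so $G_3/g_3 = -64 \cdot \tfrac{729}{8} = -5832$ forces $\mu^{-6} = -5832$. The crucial observation is that these two demands are consistent: dividing them gives $\mu^{-2} = -5832/324 = -18$, and indeed $(-18)^2 = 324$ and $(-18)^3 = -5832$, so such a $\mu$ exists. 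Choosing $\mu^{-1} = 3\sqrt2\,{\rm i}$, so that $\mu^{-2} = -18$, and feeding this into the homogeneity relation yields
$$P_\kappa(z) = \wp(z; G_2, G_3) = -18\,\wp(3\sqrt2\,{\rm i}\,z; g_2, g_3) = -18\, p_\lambda(3\sqrt2\,{\rm i}\,z),$$
which is the claim.

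I expect the only point needing genuine care, rather than mere bookkeeping, to be precisely this compatibility of the $g_2$- and $g_3$-constraints under one and the same $\mu$; everything else is forced. An equivalent and perhaps more self-contained packaging of the same verification is to set $f(z) := -18\, p_\lambda(3\sqrt2\,{\rm i}\,z)$, differentiate using $(3\sqrt2\,{\rm i})^2 = -18$ and the differential equation $(p_\lambda')^2 = 4 p_\lambda^3 - g_2 p_\lambda - g_3$, and check that the resulting cubic in $f$ is exactly $4 f^3 - G_2 f - G_3$. Since the prefactor $-18$ cancels the $(3\sqrt2\,{\rm i})^{-2}$ coming from the double pole of $p_\lambda$, the function $f$ has principal part $1/z^2$ at the origin and is manifestly even, so it must coincide with $\wp(\bullet; G_2, G_3) = P_\kappa$. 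No obstacle beyond the invariant arithmetic is anticipated.
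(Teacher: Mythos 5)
Your proposal is correct and follows essentially the same route as the paper: both invoke the homogeneity relation for $\wp$ and verify that a single scale factor (your $\mu^{-1} = 3\sqrt2\,{\rm i}$, the paper's $\gamma = \pm 3\sqrt2\,{\rm i}$) carries the invariants of $p_\lambda$ onto those of $P_\kappa$ with multiplier $-18$. Your consistency check $\mu^{-2} = -5832/324 = -18$ and the alternative differential-equation packaging are fine, but they add nothing beyond the paper's argument.
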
 

\begin{proof} 
We invoke the familiar homogeneity relation for $\wp$ functions in the form 
$$\wp(z; \gamma^4 g_2, \gamma^6 g_3) = \gamma^{2} \wp(\gamma z: g_2, g_3).$$
Here, either of the choices
$$\gamma = \pm 3 \sqrt2 \, {\rm i}$$
ensures that the invariants specified immediately prior to the theorem satisfy 
$$G_2 = \gamma^4 g_2 \; \; {\rm and} \; \; G_3 = \gamma^6 g_3$$
while the multiplier $\gamma^2$ equals $- 18$. The sign ambiguity in $\gamma$ is immaterial, as $p_{\lambda}$ is even. 
\end{proof} 

\medbreak 

This relationship between $P_{\kappa}$ and $p_{\lambda}$ implies that their period lattices are related by a scaling and a right-angle rotation: explicitly, the fundamental half-periods $\omega_{\lambda}, \, \omega_{\lambda}'$ of $p_{\lambda}$ and $\Omega_{\kappa}, \, \Omega_{\kappa}'$ of $P_{\kappa}$ are related by 
$$\omega_{\lambda} = - 3 \sqrt2 \, {\rm i} \Omega_{\kappa}'$$ 
and 
$$\omega_{\lambda}' = 3 \sqrt2 \, {\rm i} \Omega_{\kappa}.$$ 
More generally, this relationship between $\wp$ functions allows us to transfer knowledge derived in the setting of [2004] to the setting of [2016] and ${\it vice \; versa}.$ 

\medbreak 

We now embark on explicit evaluations of $\Omega_{\kappa}$ and $\Omega_{\kappa}'$ in hypergeometric terms. 

\medbreak 

We calculate the periods $\Omega = \Omega_{\kappa}$ and $\Omega' = \Omega_{\kappa}'$ of $P = P_{\kappa}$ by calculating the (identical) periods of the coperiodic elliptic function $W$ in Theorem \ref{P}. Let $\mathbb{K}$ be the `half-period' rectangle with vertices 
$$0, \, \Omega, \, \Omega + \Omega', \, \Omega', \, 0.$$ 
It is a familiar fact that as the perimeter of the rectangle $\mathbb{K}$ is traversed in this counterclockwise direction, the values of the $\wp$ function $P$ are real and decrease strictly from $+ \infty$ to $- \infty$. Moreover, the values of $P$ at the vertices $\Omega, \, \Omega + \Omega', \, \Omega$ are critical; the corresponding values of $W$ are the zeros of the cubic $4 W^3 + 3 W - \cos 2 \alpha$,  namely  
$$W(\Omega) = \cos \tfrac{2}{3} \alpha, \, W(\Omega + \Omega') = \cos \tfrac{2}{3} (\pi - \alpha), \, W(\Omega') = \cos \tfrac{2}{3} (\pi + \alpha)$$
in decreasing order, with $W(\Omega) > 0 > W(\Omega')$ and $W(\Omega + \Omega')$ between the extremes, its sign dependent on $\kappa$. The function $W$ has a pole (where $P = 6$) at a point on the lower edge $(0, \Omega)$ of $\mathbb{K}$ whose precise location will be revealed after Theorem \ref{oo'}; the values of $W$ elsewhere on the perimeter of $\mathbb{K}$ are of course real. 

\medbreak 

Now, we calculate $\Omega = \Omega_{\kappa}$ by integration along the upper edge of $\mathbb{K}$ (as integration along the lower edge would encounter the pole). 

\medbreak 

\begin{theorem} \label{O}
$$\Omega_{\kappa} = \tfrac{1}{2 \sqrt6} \pi \, _2F_1 (\tfrac{1}{3}, \tfrac{2}{3}; 1; \kappa^2).$$
\end{theorem}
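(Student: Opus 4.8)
The plan is to compute $\Omega_\kappa$ as a real period of the coperiodic elliptic function $W$ of Theorem \ref{P}, integrating $\mathrm{d}z = \mathrm{d}W/W'$ along the \emph{upper} edge of the half-period rectangle $\mathbb{K}$ (the lower edge is barred because it meets the pole of $W$ where $P = 6$), and then to recognize the resulting integral as precisely the Chebyshev integral already evaluated in Theorem \ref{evalchebyshev}.

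First I would parametrize the upper edge by $z = \Omega' + t$ with $t$ running over $[0, \Omega]$. Along it $W$ is real and, by the catalogue of critical values recorded just before the theorem, increases from $W(\Omega') = \cos \tfrac23(\pi + \alpha)$ to $W(\Omega + \Omega') = \cos \tfrac23(\pi - \alpha)$; these are the smallest and middle roots of the cubic factor $4W^3 - 3W - \cos 2\alpha$ of Theorem \ref{W} (recall $\cos 2\alpha = 1 - 2\kappa^2$). Since $\mathrm{d}z = \mathrm{d}t$ and $W$ increases, the branch of $W' = \sqrt{8(W+1)(4W^3 - 3W - \cos 2\alpha)}$ to take is the positive one, giving
$$\Omega_\kappa = \int_{\cos \frac23(\pi + \alpha)}^{\cos \frac23(\pi - \alpha)} \frac{\mathrm{d}W}{\sqrt{8(W+1)(4W^3 - 3W - \cos 2\alpha)}}.$$
Here I must verify that the radicand is strictly positive on the open interval: this follows from $W + 1 > 0$ (the lower limit $\cos\tfrac23(\pi+\alpha)$ lies in $(-1,-\tfrac12)$ for $\alpha\in(0,\tfrac12\pi)$) together with the positivity of the cubic strictly between its two smaller roots.

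Next I would undo the very Chebyshev substitution that produced $W$, namely $W = T_2(x) = 2x^2 - 1$, under which $4W^3 - 3W = T_6(x)$ and $\mathrm{d}W = 4x\,\mathrm{d}x$ with $4x = 2\sqrt{2(W+1)}$ on the positive branch. Because $x \mapsto 2x^2 - 1$ is strictly increasing on $(0,\infty)$ and the limits $\cos\tfrac13(\pi\pm\alpha)$ are positive for $\alpha\in(0,\tfrac12\pi)$, this is a legitimate monotone change of variable; it returns the lower limit to $\cos\tfrac13(\pi+\alpha)$ and the upper limit to $\cos\tfrac13(\pi-\alpha)$, and it converts the integrand exactly into $\mathrm{d}x/\sqrt{T_6(x) - \cos 2\alpha}$. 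Hence
$$\Omega_\kappa = \int_{\cos \frac13(\pi + \alpha)}^{\cos \frac13(\pi - \alpha)} \frac{\mathrm{d}x}{\sqrt{T_6(x) - \cos 2\alpha}},$$
and Theorem \ref{evalchebyshev} identifies the right-hand side as $\tfrac{1}{\sqrt6}\cdot\tfrac12\pi\,_2F_1(\tfrac13,\tfrac23;1;\kappa^2) = \tfrac{1}{2\sqrt6}\pi\,_2F_1(\tfrac13,\tfrac23;1;\kappa^2)$, which is the assertion.

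The genuine content lies in the first step rather than the last: the only real obstacle is the careful bookkeeping of the contour — confirming that the upper edge is the correct edge, that $W$ runs monotonically between the stated critical values with the integrand real, and that the square-root branch is the positive one. Once these orientation and sign issues are pinned down, the substitution $W = 2x^2 - 1$ and the appeal to Theorem \ref{evalchebyshev} are routine.
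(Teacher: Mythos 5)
Your proof is correct and follows essentially the same route as the paper's: parametrize the top edge by $z = \Omega' + t$, note that $W$ increases there so the positive square-root branch applies, convert $\Omega = \int_0^{\Omega} \mathrm{d}t$ into an integral of $\mathrm{d}W/W'$ between the critical values $\cos\tfrac{2}{3}(\pi \pm \alpha)$, undo the Chebyshev substitution $W = 2x^2 - 1$, and invoke Theorem \ref{evalchebyshev}. The only differences are cosmetic: you spell out justifications (positivity of the radicand, monotonicity and branch of the substitution) that the paper compresses into ``it may be checked,'' and you write the cubic correctly as $4W^3 - 3W - \cos 2\alpha$, consistent with Theorem \ref{W}, whereas the paper's proof carries a sign typo ($+3W$).
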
 

\begin{proof} 
Parametrize the top edge of $\mathbb{K}$ by $z = t + \Omega'$ for $0 \leqslant t \leqslant \Omega$. It may be checked that $W$ increases with $t$ here, so 
$$\frac{{\rm d} W}{{\rm d} t} = \sqrt{8 (W + 1)(4 W^3 + 3 W - \cos 2 \alpha)}$$
with the positive square-root, whence 
$$\Omega = \int_0^{\Omega} 1 = \int_{\cos \tfrac{2}{3} (\pi + \alpha)}^{\cos \tfrac{2}{3} (\pi - \alpha)} \frac{1}{\sqrt{8 (W + 1)(4 W^3 + 3 W - \cos 2 \alpha)}} \, {\rm d} W.$$
In the last integral, substitute $W = 2 w^2 - 1$ to deduce that 
$$\Omega =  \int_{\cos \tfrac{1}{3} (\pi + \alpha)}^{\cos \tfrac{1}{3} (\pi - \alpha)} \frac{1}{\sqrt{T_6(w) - \cos 2 \alpha}} \, {\rm d} w$$
and summon Theorem \ref{evalchebyshev} to conclude the proof. 
\end{proof} 

\medbreak 

Next, we calculate $\Omega' = \Omega_{\kappa}'$ by integration up the right edge of $\mathbb{K}$. 

\medbreak 

\begin{theorem} \label{O'}
$$\Omega_{\kappa}' = {\rm i} \tfrac{1}{6 \sqrt2} \pi \, _2F_1 (\tfrac{1}{3}, \tfrac{2}{3}; 1; \lambda^2).$$
\end{theorem}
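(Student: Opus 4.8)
The plan is to compute $\Omega_{\kappa}'$ by direct integration up the right edge of $\mathbb{K}$, exactly mirroring the proof of Theorem \ref{O}, and then to reduce the resulting integral to the complementary modulus $\lambda$. Parametrize the right edge by $z = \Omega + {\rm i}\,t$ and write $\Omega' = {\rm i}\,\Omega_0'$ with $\Omega_0' > 0$. Along this edge $W$ is real and runs from $W(\Omega) = \cos\tfrac{2}{3}\alpha$ down to $W(\Omega + \Omega') = \cos\tfrac{2}{3}(\pi - \alpha)$, i.e.\ between the largest and middle roots of the cubic in Theorem \ref{W}; there that cubic is negative while $W + 1 > 0$, so the differential equation of Theorem \ref{W} gives $(dW/dz)^2 < 0$. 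The substitution $dz = {\rm i}\,dt$ converts this into a genuinely positive quantity, $(dW/dt)^2 = 8(W+1)(\cos 2\alpha + 3W - 4W^3)$; choosing the decreasing branch yields
$$\Omega_0' = \int_{\cos\frac{2}{3}(\pi-\alpha)}^{\cos\frac{2}{3}\alpha} \frac{dW}{\sqrt{8(W+1)(\cos 2\alpha + 3W - 4W^3)}}.$$

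Next I would run the very same substitution $W = 2w^2 - 1$ used in Theorem \ref{O}. Since $8(W+1) = 16 w^2$ and $4W^3 - 3W = T_6(w)$, the integral collapses to
$$\Omega_0' = \int_{\cos\frac{1}{3}(\pi-\alpha)}^{\cos\frac{1}{3}\alpha} \frac{dw}{\sqrt{\cos 2\alpha - T_6(w)}},$$
which is the exact analogue of the Chebyshev integral of Theorem \ref{evalchebyshev}, except that it is taken over the band where $T_6 < \cos 2\alpha$ rather than $T_6 > \cos 2\alpha$. To evaluate it, undo Chebyshev with $w = \cos\tfrac{1}{3}\theta$ (so $T_6(w) = \cos 2\theta$), obtaining $\Omega_0' = \tfrac{1}{3}\int_{\alpha}^{\pi-\alpha} \sin\tfrac{1}{3}\theta \,/\sqrt{\cos 2\alpha - \cos 2\theta}\; d\theta$, and then shift $\theta = \tfrac{\pi}{2} + \psi$. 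The denominator becomes $\sqrt{\cos 2\alpha + \cos 2\psi}$, which is even in $\psi$, and the range $[\alpha - \tfrac{\pi}{2},\, \tfrac{\pi}{2} - \alpha]$ is symmetric; expanding $\sin(\tfrac{\pi}{6} + \tfrac{\psi}{3})$ by the addition formula, the odd $\sin\tfrac{\psi}{3}$ part integrates to zero and leaves
$$\Omega_0' = \tfrac{1}{3} \int_0^{\frac{\pi}{2} - \alpha} \frac{\cos\frac{1}{3}\psi}{\sqrt{\cos 2\psi + \cos 2\alpha}}\, d\psi.$$
Introducing the complementary angle $\beta = \tfrac{\pi}{2} - \alpha$, for which $\sin\beta = \lambda$ and $\cos 2\beta = -\cos 2\alpha$, this last display is precisely $\tfrac{1}{3\sqrt{2}}$ times the right-hand side of Theorem \ref{evaltrig} evaluated at $\beta$; hence $\Omega_0' = \tfrac{1}{6\sqrt{2}}\pi\,{}_2F_1(\tfrac{1}{3},\tfrac{2}{3};1;\lambda^2)$ and $\Omega_{\kappa}' = {\rm i}\,\Omega_0'$ is as claimed.

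The step I expect to be the main obstacle is the careful bookkeeping of signs and orientation: one must verify that the cubic factor is genuinely negative on the right edge so that the purely imaginary direction of integration compensates exactly and delivers a positive real integral, and then recognize the parity cancellation that trades the $\sin\tfrac{1}{3}\theta$ numerator (natural to the $\kappa$-picture) for the $\cos\tfrac{1}{3}\psi$ numerator (natural to the $\lambda$-picture). As an independent check of the final value, Theorem \ref{pP} together with the half-period relation $\omega_{\lambda} = -3\sqrt{2}\,{\rm i}\,\Omega_{\kappa}'$ recorded just after it gives $\Omega_{\kappa}' = \tfrac{\rm i}{3\sqrt{2}}\,\omega_{\lambda}$; since $\omega_{\lambda} = \tfrac{1}{2}\pi\,{}_2F_1(\tfrac{1}{3},\tfrac{2}{3};1;\lambda^2)$ is the real half-period of the ${\rm dn}_3$ construction at modulus $\lambda$, this reproduces the stated formula at once and confirms the direct computation.
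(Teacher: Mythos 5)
Your proposal is correct and takes essentially the same approach as the paper: integration up the right edge of $\mathbb{K}$, reduction via the Chebyshev substitution to $\tfrac{1}{3}\int_{\alpha}^{\pi-\alpha} \sin\tfrac{1}{3}\theta \, / \sqrt{\cos 2\alpha - \cos 2\theta}\; {\rm d}\theta$, a half-turn shift plus parity to trade $\sin\tfrac{1}{3}\theta$ for $\cos\tfrac{1}{3}\psi$, and finally Theorem \ref{evaltrig} at the complementary angle $\beta = \tfrac{\pi}{2} - \alpha$. The only deviations are cosmetic and sound: your explicit bookkeeping of the factor ${\rm i}$ (which the paper glosses over), the intermediate variable $w$ in place of the paper's one-step substitution $x = \cos\tfrac{2}{3}\theta$, and the welcome independent cross-check via Theorem \ref{pP}.
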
 

\begin{proof} 
Put $x(t) = W(\Omega + {\rm i} t)$ for $0 \leqslant t \leqslant \Omega'$. The differential equation for $W$ corresponds to the differential equation 
$$(x')^2 = 8 (x + 1) (\cos 2 \alpha + 3 x - 4 x^3)$$
for $x$ and $W$ decreases up the right edge, so 
$$\frac{{\rm d} x}{{\rm d} t} = - \sqrt{8 (x + 1)(\cos 2 \alpha + 3 x - 4 x^3)}.$$
Integrate to get  
$$\Omega' = \int_0^{\Omega'} 1 \, {\rm d} t = \int_{\cos \frac{2}{3} (\pi - \alpha)}^{\cos \frac{2}{3} \alpha} \frac{1}{\sqrt{8 (x + 1)(\cos 2 \alpha +3 x - 4 x^3)}} \, {\rm d} x$$
and substitute $x = \cos \frac{2}{3} \theta$ to get 
$$\Omega' = \frac{1}{3} \int_{\alpha}^{\pi - \alpha} \frac{\sin \frac{1}{3} \theta}{\sqrt{\cos 2 \alpha - \cos 2 \theta}} \, {\rm d} \theta$$
after trigonometric duplication in numerator and denominator. The substitution $\theta = \frac{1}{2} - \psi$ now yields 
$$\Omega' = \frac{1}{3} \int_0^{\frac{1}{2} \pi - \alpha}  \frac{\cos \frac{1}{3} \psi}{\sqrt{\cos 2 \alpha  + \cos 2 \psi}} \, {\rm d} \psi$$
when trigonometric addition and parity are taken into account as in the proof of Theorem \ref{evalchebyshev}. Let $\beta = \frac{1}{2} - \alpha$: then $\sin \beta = \lambda$ and 
$$\Omega' = \frac{1}{3} \int_0^{\beta} \frac{\cos \frac{1}{3} \psi}{\sqrt{\cos 2 \psi  - \cos 2 \beta}} \, {\rm d} \psi$$
and Theorem \ref{evaltrig} concludes the proof. 
\end{proof} 

\medbreak 

Of course, we can identify the half-periods of $\delta = {\rm dn}_3$ along the lines of Theorem \ref{O} and Theorem \ref{O'}; we outline the identification of the real half-period $\omega = \omega_{\kappa}$. By virtue of the differential equation in Theorem \ref{dn3} and the fact that $\delta$ decreases along the lower edge $(0, \omega)$ of the `half-period' rectangle, from its value $1$ at $0$ to its value $2 \cos \frac{2}{3} \alpha - 1$ at $\omega$, we deduce that 
$$\omega = - \frac{3}{2} \int_1^{2 \cos \frac{2}{3} \alpha - 1} \frac{1}{\sqrt{(1 - \delta) (\delta^3 + 3 \delta^2 - 4 \lambda^2)}} \, {\rm d} \delta$$ 
whence the substitution $\delta = 2 \cos \frac{2}{3} t - 1 = 4 \cos^2 \frac{1}{3} t -3 = 1 - 4 \sin^2 \frac{1}{3} t$ yields 
$$\omega = \sqrt2 \int_0^{\alpha} \frac{\cos \frac{1}{3} t}{\sqrt{T_6 (\cos \frac{1}{3}t) - (2 \lambda^2 - 1)}} \, {\rm d} t = \sqrt2 \int_0^{\alpha} \frac{\cos \frac{1}{3} t}{\sqrt{\cos 2 t - \cos 2 \alpha}} \, {\rm d} t$$
and Theorem \ref{evaltrig} completes the identification.  

\medbreak 

Alternatively, we may activate the transfer to which we alluded after Theorem \ref{pP}. 

\medbreak 

\begin{theorem} \label{oo'}
The fundamental half-periods of $p = p_{\kappa}$ are given by 
$$\omega_{\kappa} = \tfrac{1}{2} \pi \, _2F_1 (\tfrac{1}{3}, \tfrac{2}{3}; 1; \kappa^2)$$
and 
$$\omega_{\kappa}' = {\rm i} \, \tfrac{\sqrt3}{2} \pi \, _2F_1 (\tfrac{1}{3}, \tfrac{2}{3}; 1; \lambda^2)$$
\end{theorem}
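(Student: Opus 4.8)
The plan is to avoid a fresh contour integration and instead to \emph{transfer} the period data already secured for $P_\kappa$ in Theorem \ref{O} and Theorem \ref{O'} across the homogeneity identity of Theorem \ref{pP}. Recall that immediately after Theorem \ref{pP} the relation $P_\kappa(z) = -18\, p_\lambda(3\sqrt2\,\mathrm{i}\,z)$ was seen to scale-and-rotate the period lattices, giving the relations $\omega_\lambda = -3\sqrt2\,\mathrm{i}\,\Omega_\kappa'$ and $\omega_\lambda' = 3\sqrt2\,\mathrm{i}\,\Omega_\kappa$ between the fundamental half-periods of $p_\lambda$ and those of $P_\kappa$. These two identities are the engine of the argument.

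First I would substitute the hypergeometric value $\Omega_\kappa' = \mathrm{i}\,\tfrac{1}{6\sqrt2}\pi\,_2F_1(\tfrac13,\tfrac23;1;\lambda^2)$ of Theorem \ref{O'} into $\omega_\lambda = -3\sqrt2\,\mathrm{i}\,\Omega_\kappa'$: the two factors of $\mathrm{i}$ supply a sign $-1$, the numerical constants collapse via $\tfrac{3\sqrt2}{6\sqrt2} = \tfrac12$, and one reads off $\omega_\lambda = \tfrac12\pi\,_2F_1(\tfrac13,\tfrac23;1;\lambda^2)$. Likewise, inserting $\Omega_\kappa = \tfrac{1}{2\sqrt6}\pi\,_2F_1(\tfrac13,\tfrac23;1;\kappa^2)$ of Theorem \ref{O} into $\omega_\lambda' = 3\sqrt2\,\mathrm{i}\,\Omega_\kappa$ and simplifying $\tfrac{3\sqrt2}{2\sqrt6} = \tfrac{\sqrt3}{2}$ gives $\omega_\lambda' = \mathrm{i}\,\tfrac{\sqrt3}{2}\pi\,_2F_1(\tfrac13,\tfrac23;1;\kappa^2)$. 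Here I would pause to check signs: since $\Omega_\kappa'$ is positive-imaginary and $\Omega_\kappa$ positive-real, the two outputs come out respectively positive-real and positive-imaginary, so they are correctly the real and the purely-imaginary fundamental half-periods of $p_\lambda$.

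Finally I would invoke the symmetry of the construction. The invariants recorded just before Theorem \ref{pP} exhibit $p_\lambda$ as nothing but $p_\kappa$ with the roles of $\kappa$ and $\lambda$ interchanged, and since $\lambda = \sqrt{1 - \kappa^2}$ this interchange is an involution. Applying it to the two formulas just obtained for $p_\lambda$ — that is, swapping $\kappa^2 \leftrightarrow \lambda^2$ throughout — turns them into the asserted $\omega_\kappa = \tfrac12\pi\,_2F_1(\tfrac13,\tfrac23;1;\kappa^2)$ and $\omega_\kappa' = \mathrm{i}\,\tfrac{\sqrt3}{2}\pi\,_2F_1(\tfrac13,\tfrac23;1;\lambda^2)$. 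As a consistency check, the value of $\omega_\kappa$ so obtained agrees with the real half-period $\omega$ computed at the very start of the section.

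I do not anticipate a genuine obstacle: once the transfer relations are in hand the computation is purely mechanical. The only points demanding care are the bookkeeping of the constant $3\sqrt2\,\mathrm{i}$ — in particular the $\mathrm{i}^2 = -1$ that renders $\omega_\lambda$ real — and the verification that the positive-real and positive-imaginary labeling survives the scale-and-rotation, so that each half-period is matched with the correct member of the fundamental pair rather than with its negative.
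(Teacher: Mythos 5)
Your proposal is correct and is precisely the paper's own argument: the paper's proof reads ``Apply the identities following Theorem \ref{pP} to Theorem \ref{O} and Theorem \ref{O'}, switching to the complementary modulus,'' which is exactly your transfer of $\Omega_\kappa$ and $\Omega_\kappa'$ through $\omega_\lambda = -3\sqrt2\,{\rm i}\,\Omega_\kappa'$ and $\omega_\lambda' = 3\sqrt2\,{\rm i}\,\Omega_\kappa$ followed by the involution $\kappa \leftrightarrow \lambda$. Your expanded bookkeeping of the constants and the sign/positivity checks simply makes explicit what the paper leaves to the reader, including the closing consistency check against the expression for $\omega$ noted after Theorem \ref{p}.
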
 

\begin{proof} 
Apply the identities following Theorem \ref{pP} to Theorem \ref{O} and Theorem \ref{O'}, switching to the complementary modulus. Naturally, the expression for $\omega_{\kappa}$ agrees with the earlier expression noted after Theorem \ref{p}. 
\end{proof} 

\medbreak 

Our identifications of the fundamental periods have the following consequences for the shapes of the period lattices: on the one hand 
$$\frac{\Omega_{\kappa}'}{\Omega_{\kappa}} = {\rm i} \, \frac{1}{\sqrt3} \; \frac{_2F_1 (\tfrac{1}{3}, \tfrac{2}{3}; 1; 1 - \kappa^2)}{_2F_1 (\tfrac{1}{3}, \tfrac{2}{3}; 1; \kappa^2)};$$ 
on the other hand 
$$\frac{\omega_{\kappa}'}{\omega_{\kappa}} = {\rm i} \, \sqrt3 \; \frac{_2F_1 (\tfrac{1}{3}, \tfrac{2}{3}; 1; 1 - \kappa^2)}{_2F_1 (\tfrac{1}{3}, \tfrac{2}{3}; 1; \kappa^2)}.$$ 
Here, we have written $1 - \kappa^2$ in place of $\lambda^2$ so as to conform with the way in which such ratios are traditionally expressed. 

\medbreak 

Naturally, the transfer noted after Theorem \ref{pP} also applies to the problem of locating poles. From Theorem \ref{p} we see that $\delta$ has poles where $p$ has value $-1/3$; it is proved in [2004] that these poles lie at the points congruent to $\pm \tfrac{2}{3} \omega_{\kappa}'$.  From Theorem \ref{P} we see that $W$ has poles where $P$ has value $6$; it is proved in [2016] that these poles lie at the points congruent to $\pm \tfrac{2}{3} \Omega_{\kappa}$. Theorem \ref{pP} links these determinations (and indeed allows us to deduce the one from the other); in particular, the `${\rm i}$' twist in that theorem explains why $\delta$ has a pair of imaginary poles whereas $W$ has a pair of real poles. 

\medbreak 

We make two further points in this regard. 

\medbreak 

First, notice that these poles are situated at certain of the points of period three, each relative to the respective period lattice; this is how the poles of ${\rm dn}_3$ were located in [2019] and [2020]. In [2004] their placement was established very differently, using theta functions; in [2016] they were found by integrating up the left edge of the half-period rectangle, with the aid of a hyperbolic counterpart to the Mehler-Dirichlet integral formula. 

\medbreak 

Second, recall $W = 2 w^2 - 1$ from the discussion leading to Theorem \ref{W}. The poles of $W$ are clearly simple: indirectly, because the second-order elliptic function $W$ has two poles in its period rectangle; directly, because $P' \neq 0$ where $P = 6$. It follows at once that $W$ lacks meromorphic square-roots; in particular, $w$ cannot be elliptic. We remark that our $w$ is called $y_6$ in [2016]; so $y_6$ is not an elliptic function, although its square is.  

\medbreak

\bigbreak

\begin{center} 
{\small R}{\footnotesize EFERENCES}
\end{center} 
\medbreak 

[1953] A. Erdelyi (director), {\it Higher Transcendental Functions}, Volume 1, McGraw-Hill. 

\medbreak 

[1989] B.C. Berndt, {\it Ramanujan's Notebooks Part II}, Springer-Verlag. 

\medbreak 

[2004] Li-Chien Shen, {\it On the theory of elliptic functions based on $_2F_1(\frac{1}{3}, \frac{2}{3} ; \frac{1}{2} ; z)$}, Transactions of the American Mathematical Society {\bf 357} 2043-2058. 

\medbreak 

[2014] Li-Chien Shen, {\it On a theory of elliptic functions based on the incomplete integral of the hypergeometric function $_2 F_1 (\frac{1}{4}, \frac{3}{4} ; \frac{1}{2} ; z)$}, Ramanujan Journal {\bf 34} 209-225. 

\medbreak 

[2016] Li-Chien Shen, {\it On Three Differential Equations Associated with Chebyshev Polynomials of Degrees 3, 4 and 6}, Acta Mathematica Sinica, English Series {\bf 33} (1) 21-36. 

\medbreak 

[2019] P.L. Robinson, {\it Elliptic functions from $F(\frac{1}{3}, \frac{2}{3} ; \frac{1}{2} ; \bullet)$}, arXiv 1907.09938. 

\medbreak 

[2020] P.L. Robinson, {\it The elliptic function ${\rm dn}_3$ of Shen}, arXiv 2008.13572.

\end{document}